\newtheorem{theorem}{Theorem}
\theoremstyle{plain}
\newtheorem{corollary}{Corollary}
\newtheorem{definition}{Definition}
\newtheorem{lemma}{Lemma}
\numberwithin{equation}{section}
\theoremstyle{theorem}
\begin{document}
\title[A note on subordination]{A note on subordination}
\author{Andr\'e Gomes}
\address{%
\parbox{.9\textwidth}{\centering
Department of Mathematics\\
Institute of Mathematics and Statistics\\
University of S\~ao Paulo\\
S\~ao Paulo - 05508-090 - SP\\
Brazil\\[5pt]}}
\email{gomes@ime.usp.br}

\begin{abstract}
In this paper we present a result about analytic functions $f$ defined on the open unit disc and with a finite number of exceptional values contained in the real interval $(0,1)$. We find an upper bound for $\left|f'(0)\right|$. This bound is sharp in the case of one exceptional value. We also analyzed the case of two exceptional values.
\end{abstract}

\maketitle

\section{Preliminaries}
The definition and results of this section can be seen in [1] (p. 450).
\begin{definition} 
Let $\mathbb{D}=\left\{z\in\mathbb{C};\ \left|z\right|<1\right\}$ be the open unit disk in the complex plane. Let $f$ and $g$ be analytic functions on $\mathbb{D}$. The function $f$ is said to be \textit{subordinate} to $g$ (and we denote this relation by $f\prec g$) if there exists an analytic function $\varphi:\mathbb {D}\longrightarrow \mathbb{D}$ satisfying $\varphi(0)=0$ and such that $f=g\circ\varphi$. 
\end{definition} 
\vspace{0.2cm}
The following two lemmas are immediate consequences of the Definition 1.
\\
\begin{lemma}\label{le1} 
If $f\prec g$, then $\left|f'(0)\right|\leq \left|g'(0)\right|$.
\end{lemma}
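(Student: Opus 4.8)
The plan is to reduce the statement to an application of the chain rule together with the Schwarz lemma. Since $f \prec g$, the Definition 1 supplies an analytic self-map $\varphi:\mathbb{D}\longrightarrow\mathbb{D}$ with $\varphi(0)=0$ and $f = g\circ\varphi$. The first step is simply to differentiate this composition at the origin.

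Applying the chain rule to $f = g\circ\varphi$ gives $f'(z) = g'(\varphi(z))\,\varphi'(z)$ for every $z\in\mathbb{D}$. Evaluating at $z=0$ and using $\varphi(0)=0$ yields
\[
f'(0) = g'(\varphi(0))\,\varphi'(0) = g'(0)\,\varphi'(0).
\]
Taking absolute values, $\left|f'(0)\right| = \left|g'(0)\right|\,\left|\varphi'(0)\right|$, so the entire lemma comes down to controlling the factor $\left|\varphi'(0)\right|$.

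The key step is to bound $\left|\varphi'(0)\right|\leq 1$. Here I would invoke the Schwarz lemma: because $\varphi$ is analytic on $\mathbb{D}$, maps $\mathbb{D}$ into $\mathbb{D}$, and satisfies $\varphi(0)=0$, the hypotheses of the Schwarz lemma are exactly met, and its conclusion gives $\left|\varphi'(0)\right|\leq 1$. This is the only substantive ingredient, and it is where the geometric content of subordination enters; everything else is formal.

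Combining the two steps, $\left|f'(0)\right| = \left|g'(0)\right|\,\left|\varphi'(0)\right| \leq \left|g'(0)\right|$, which is the desired inequality. I do not anticipate any real obstacle: the subtlety, if any, is only in confirming that the map $\varphi$ furnished by the definition genuinely satisfies the full hypotheses of the Schwarz lemma (analyticity and the self-map property on all of $\mathbb{D}$, not merely boundedness), but this is immediate from Definition 1.
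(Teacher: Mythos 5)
Your proof is correct and follows exactly the paper's own argument: differentiate $f=g\circ\varphi$ at the origin via the chain rule and bound $\left|\varphi'(0)\right|\leq 1$ by the Schwarz lemma. You simply spell out the steps the paper compresses into one line.
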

\begin{proof}
We have $\left|f'(0)\right|=\left|g'(0)\right|\left|\varphi'(0)\right|$ and (by Schwarz's Lemma) $\left|\varphi'(0)\right|\leq 1$.
\end{proof}
\vspace{0.1cm}
\begin{lemma}\label{le2}
Denote by $\mathcal{P}$ the collection of all analytic functions $f:\mathbb{D}\longrightarrow \mathbb{C}$ such that $f(0)=1$ and $\mathrm{Re}(f)>0$. If $f\in \mathcal{P}$ and $F:\mathbb{D}\longrightarrow\left\{z\in\mathbb{C};\ \mathrm{Re}(z)>0\right\}$ is the function given by
\begin{eqnarray*}
	F(z)=\frac{1+z}{1-z}
\end{eqnarray*}
then $f\prec F$.
\end{lemma}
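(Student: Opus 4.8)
The plan is to exploit the fact that $F(z)=\frac{1+z}{1-z}$ is a conformal bijection of $\mathbb{D}$ onto the right half-plane $H=\left\{z\in\mathbb{C};\ \mathrm{Re}(z)>0\right\}$, and then merely invert and compose. First I would record that $F$ is a M\"obius transformation with $F(0)=1$, and solve $w=\frac{1+z}{1-z}$ for $z$ to obtain the explicit inverse $F^{-1}(w)=\frac{w-1}{w+1}$.

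Next I would verify that $F^{-1}$ maps $H$ into $\mathbb{D}$. For $w$ with $\mathrm{Re}(w)>0$ one computes $|w+1|^{2}-|w-1|^{2}=4\,\mathrm{Re}(w)>0$, whence $|F^{-1}(w)|=\frac{|w-1|}{|w+1|}<1$. Thus $F^{-1}:H\longrightarrow\mathbb{D}$ is a well-defined analytic map, and in particular $F:\mathbb{D}\longrightarrow H$ is a biholomorphism.

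With these pieces in hand I would define $\varphi=F^{-1}\circ f$. Since $f\in\mathcal{P}$ satisfies $\mathrm{Re}(f)>0$, the function $f$ maps $\mathbb{D}$ into $H$, so the composition $\varphi:\mathbb{D}\longrightarrow\mathbb{D}$ is analytic. Moreover $\varphi(0)=F^{-1}(f(0))=F^{-1}(1)=0$, using $f(0)=1$ together with $F(0)=1$. By construction $F\circ\varphi=F\circ F^{-1}\circ f=f$, and by Definition 1 this is exactly the assertion $f\prec F$.

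I do not anticipate a serious obstacle here; the only points requiring a moment's care are confirming that $F^{-1}$ genuinely lands in $\mathbb{D}$ (the half-plane-to-disk estimate above) and that the basepoint condition $\varphi(0)=0$ holds, both of which follow immediately from $F(0)=1$. The real content of the lemma is the observation that $\mathcal{P}$ consists precisely of the analytic functions sending $\mathbb{D}$ into $H$ with the normalization $f(0)=1$, while $F$ is the canonical conformal map of $\mathbb{D}$ onto $H$ respecting that same normalization.
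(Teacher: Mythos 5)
Your proposal is correct and follows exactly the paper's own argument: the paper likewise takes $\varphi=F^{-1}\circ f$, i.e.\ $\varphi(z)=\frac{f(z)-1}{f(z)+1}$, as the subordinating map. You merely spell out the details the paper leaves implicit, namely the estimate $|w+1|^{2}-|w-1|^{2}=4\,\mathrm{Re}(w)>0$ showing $F^{-1}$ maps the right half-plane into $\mathbb{D}$, and the normalization $\varphi(0)=0$.
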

\begin{proof}
It is sufficient to define $\varphi:\mathbb{D}\longrightarrow \mathbb{D}$ by $\varphi(z)=F^{-1}(f(z))= \frac{f(z)-1}{f(z)+1}$.
\end{proof}

\section{Main results}

\begin{theorem}
Let $f:\mathbb{D}\longrightarrow \mathbb{D}$ be an analytic function with $f(0)=0$. If there exists real numbers $\left\{\alpha_{1},\alpha_{2},\ldots,\alpha_{k}\right\}\subset (0,1)$ such that $f(\mathbb{D})\cap \left\{\alpha_{1},\alpha_{2},\ldots,\alpha_{k}\right\}=\emptyset$. Then
\begin{equation}\label{eq1}
\left|f'(0)\right|\leq\frac{2\ln(\frac{1}{\alpha_{1}\cdot\alpha_{2}\cdot\ldots\cdot\alpha_{k}})}{[\frac{1-\alpha_{1}^{2}}{\alpha_{1}}+\ldots+\frac{1-\alpha_{k}^{2}}{\alpha_{k}}]}
\end{equation}
\end{theorem}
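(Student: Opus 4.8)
The plan is to exploit the omitted values to build a zero-free function, pass to its logarithm so as to land in a half-plane, and then extract the derivative bound through the class $\mathcal{P}$ using Lemmas \ref{le1} and \ref{le2}.

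First I would introduce, for each $j$, the disk automorphism $b_j(w)=\frac{w-\alpha_j}{1-\alpha_j w}$, which maps $\mathbb{D}$ onto $\mathbb{D}$ and vanishes only at $\alpha_j$. Since $f(\mathbb{D})\subset\mathbb{D}$ and $f$ omits each $\alpha_j$, the composite $b_j\circ f$ maps $\mathbb{D}$ into $\mathbb{D}\setminus\{0\}$; in particular $0<|b_j(f(z))|<1$ on $\mathbb{D}$. Because $\mathbb{D}$ is simply connected and $b_j\circ f$ is zero-free, a single-valued analytic branch of $\log(b_j\circ f)$ exists, and $\mathrm{Re}\bigl(-\log(b_j(f(z)))\bigr)=-\log|b_j(f(z))|>0$. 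Summing, I set
\[
\Phi(z)=\sum_{j=1}^{k}\bigl(-\log b_j(f(z))\bigr),
\]
an analytic function on $\mathbb{D}$ with $\mathrm{Re}\,\Phi>0$.

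Next I would normalize $\Phi$ into the class $\mathcal{P}$. Since $b_j(f(0))=b_j(0)=-\alpha_j$, we get $\mathrm{Re}\,\Phi(0)=\sum_j\ln(1/\alpha_j)=\ln\bigl(1/(\alpha_1\cdots\alpha_k)\bigr)>0$. Putting $p(z)=\frac{\Phi(z)-i\,\mathrm{Im}\,\Phi(0)}{\mathrm{Re}\,\Phi(0)}$ gives $p(0)=1$ and $\mathrm{Re}\,p>0$, so $p\in\mathcal{P}$. By Lemma \ref{le2}, $p\prec F$, and then Lemma \ref{le1} yields $|p'(0)|\le|F'(0)|=2$, that is, $|\Phi'(0)|\le 2\,\mathrm{Re}\,\Phi(0)=2\ln\bigl(1/(\alpha_1\cdots\alpha_k)\bigr)$.

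Finally, I would compute $\Phi'(0)$ directly. A short calculation gives $b_j'(0)=1-\alpha_j^{2}$ and $b_j(0)=-\alpha_j$, so the chain rule at $z=0$ (using $f(0)=0$) produces
\[
\Phi'(0)=-f'(0)\sum_{j=1}^{k}\frac{b_j'(0)}{b_j(0)}=f'(0)\sum_{j=1}^{k}\frac{1-\alpha_j^{2}}{\alpha_j}.
\]
Combining with the previous estimate and dividing by the positive sum $\sum_j\frac{1-\alpha_j^{2}}{\alpha_j}$ gives exactly \eqref{eq1}. The computations are routine; the only genuine point requiring care is the existence of the logarithmic branch — which is precisely where the omitted-value hypothesis enters — together with the observation that the branch ambiguity affects only $\mathrm{Im}\,\Phi(0)$, which is removed by the normalization and enters neither $\mathrm{Re}\,\Phi(0)$ nor $\Phi'(0)$.
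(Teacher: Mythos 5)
Your proposal is correct and takes essentially the same route as the paper: both build the Blaschke-type product over the omitted values $\alpha_{j}$, pass to a logarithm carrying $\mathbb{D}$ into a half-plane, normalize into the class $\mathcal{P}$, apply Lemmas \ref{le1} and \ref{le2}, and finish by computing the logarithmic derivative at the origin, which yields \eqref{eq1}. The only difference is cosmetic: you take logarithms factor by factor and explicitly subtract $i\,\mathrm{Im}\,\Phi(0)$, whereas the paper takes a single logarithm $g$ of the product $h$ and tacitly chooses the branch with $g(0)$ real; your normalization handles the branch ambiguity slightly more carefully.
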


\begin{proof}
We define the analytic function $h:\mathbb{D}\longrightarrow \mathbb{C}-\left\{0\right\}$ given by the following expression
\begin{eqnarray*}	h(z)=\left(\frac{\alpha_{1}-f(z)}{1-\alpha_{1}f(z)}\right)\cdot\left(\frac{\alpha_{2}-f(z)}{1-\alpha_{2}f(z)}\right)\cdot\ldots\cdot\left(\frac{\alpha_{k}-f(z)}{1-\alpha_{k}f(z)}\right)
\end{eqnarray*}
The open disk $\mathbb{D}$ is simply connected and then there exists an analytic function $g:\mathbb{D}\longrightarrow \mathbb{C}$ such that $\mathrm{e}^{g(z)}=h(z)$ for all $z\in\mathbb{D}$. Taking modulus on both sides of $\mathrm{e}^{g}=h$ we obtain
\begin{eqnarray*}	\mathrm{e}^{\mathrm{Re}(g(z))}=\left|\frac{\alpha_{1}-f(z)}{1-\alpha_{1}f(z)}\right|\cdot\ldots\cdot\left|\frac{\alpha_{k}-f(z)}{1-\alpha_{k}f(z)}\right|,\ \ \ \ \ \forall z\in \mathbb{D}
\end{eqnarray*}
If we take the logarithms of both sides of the last expression, we obtain
\begin{eqnarray*}	\mathrm{Re}(g(z))=\ln\left\{\left|\frac{\alpha_{1}-f(z)}{1-\alpha_{1}f(z)}\right|\cdot\ldots\cdot\left|\frac{\alpha_{k}-f(z)}{1-\alpha_{k}f(z)}\right|\right\},\ \ \ \ \ \forall z\in \mathbb{D}
\end{eqnarray*}
We note that the expression for $\mathrm{Re}(g(z))$ is negative for all $z\in\mathbb{D}$. Define now 
\begin{eqnarray*}
\rho:\mathbb{D}\longrightarrow \mathbb{C},\ \ \ \ \ \rho(z)=\frac{g(z)}{\ln(\alpha_{1})+\ldots+\ln(\alpha_{k})}	
\end{eqnarray*}
We claim that $\rho \in \mathcal{P}$. In fact, it's easy to see that the function $\rho$ is analytic, $\rho(0)=1$ and $\mathrm{Re}(\rho)>0$. Hence, according to Lemma 2, we have $\rho\prec F$. The Lemma 1 gives us $\left|\rho'(0)\right|\leq \left|F'(0)\right|=2$. Now, we take the derivative of both sides of $\mathrm{e}^{g}=h$. We obtain
\begin{eqnarray*}	g'(z)\!\!\!&=&\!\!\!\mathrm{e}^{-g(z)}\cdot\left\{\sum\limits_{j=1}^{k}(\frac{\alpha_{1}-f(z)}{1-\alpha_{1}f(z)})\cdot\ldots\cdot(\frac{\alpha_{j}-f(z)}{1-\alpha_{j}f(z)})'\cdot\ldots\cdot(\frac{\alpha_{k}-f(z)}{1-\alpha_{k}f(z)})\right\}\\
&=&\!\!\!\mathrm{e}^{-g(z)}\cdot\left\{\sum\limits_{j=1}^{k}(\frac{\alpha_{1}-f(z)}{1-\alpha_{1}f(z)})\cdot\ldots\cdot(\frac{(\alpha_{j}^{2}-1)f'(z)}{(1-\alpha_{j}f(z))^{2}})\cdot\ldots\cdot(\frac{\alpha_{k}-f(z)}{1-\alpha_{k}f(z)})\right\}
\end{eqnarray*}
and then $g'(0)=f'(0)\cdot\left\{\frac{\alpha_{1}^{2}-1}{\alpha_{1}}+\ldots+\frac{\alpha_{k}^{2}-1}{\alpha_{k}}\right\}$ (because of $\mathrm{e}^{g(0)}=\alpha_{1}\cdot\ldots\cdot\alpha_{k}$). Then,
\begin{eqnarray*}	\left|\rho'(0)\right|=\frac{\left|f'(0)\right|}{\left|\ln(\alpha_{1})+\ldots+\ln(\alpha_{k})\right|}\cdot\left[\frac{1-\alpha_{1}^{2}}{\alpha_{1}}+\ldots+\frac{1-\alpha_{k}^{2}}{\alpha_{k}}\right]
\end{eqnarray*}
Then the inequality (2.1) is a consequence of $\left|\rho'(0)\right|\leq 2$.
\end{proof}

\begin{corollary}\upshape{[2, p. 233]} \textit{Suppose that $\alpha\in (0,1)$. If $f:\mathbb{D}\longrightarrow \mathbb{D}$ is an analytic function with $f(0)=0$ and $f(z)\neq\alpha$ for all $z\in\mathbb{D}$, then we have the following inequality $\left|f'(0)\right|\leq 2\alpha\ln(1/\alpha)/(1-\alpha^{2})$. This inequality is sharp in the sense that there exists a function $f$ such that equality holds.}
\end{corollary}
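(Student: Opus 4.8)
The inequality itself requires no new work: it is exactly the conclusion of the Theorem specialized to $k=1$. Setting $\alpha_{1}=\alpha$ in \eqref{eq1}, the denominator collapses to $\tfrac{1-\alpha^{2}}{\alpha}$ and the numerator to $2\ln(1/\alpha)$, so that $\left|f'(0)\right|\le \frac{2\ln(1/\alpha)}{(1-\alpha^{2})/\alpha}=\frac{2\alpha\ln(1/\alpha)}{1-\alpha^{2}}$, as claimed. The substance of the corollary is therefore the sharpness statement, and my plan is to produce an explicit extremal $f$ by tracing the equality conditions back through the proof of the Theorem.

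The bound came from the single estimate $\left|\rho'(0)\right|\le\left|F'(0)\right|=2$, which in turn rested on Schwarz's Lemma applied to $\varphi=F^{-1}\circ\rho$ inside Lemma \ref{le1}. Equality in Schwarz's Lemma forces $\varphi$ to be a rotation, i.e. $\varphi(z)=cz$ with $\left|c\right|=1$; hence equality propagates upward precisely when $\rho=F\circ\varphi$ takes the form $\rho(z)=\frac{1+cz}{1-cz}$. For concreteness I would take $c=1$. Unwinding the definitions with $k=1$ then pins down every object in the proof: from $\rho(z)=g(z)/\ln\alpha$ I recover $g(z)=\ln(\alpha)\,\frac{1+z}{1-z}$, from $h=\mathrm{e}^{g}$ I recover $h(z)=\alpha^{(1+z)/(1-z)}$, and from $h(z)=\frac{\alpha-f(z)}{1-\alpha f(z)}$ I solve the M\"obius relation for $f$ to obtain the candidate
\[
f(z)=\frac{\alpha-\alpha^{(1+z)/(1-z)}}{1-\alpha\,\alpha^{(1+z)/(1-z)}}.
\]

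It then remains to check that this $f$ meets all the hypotheses and realizes equality. Since $\operatorname{Re}\frac{1+z}{1-z}>0$ on $\mathbb{D}$ and $\ln\alpha<0$, the exponent has negative real part, so $\left|h(z)\right|<1$, i.e. $h(\mathbb{D})\subset\mathbb{D}$; composing with the disk automorphism $w\mapsto\frac{\alpha-w}{1-\alpha w}$ shows $f(\mathbb{D})\subset\mathbb{D}$. Because $h$ never vanishes, the equation $f(z)=\alpha$ would force $h(z)=0$, so $f$ omits $\alpha$; and $h(0)=\alpha$ gives $f(0)=0$. Finally a direct differentiation, using $h'(0)=\mathrm{e}^{g(0)}g'(0)=\alpha\cdot 2\ln\alpha$ and the quotient rule at the fixed point $h(0)=\alpha$, yields $f'(0)=-h'(0)/(1-\alpha^{2})=\frac{2\alpha\ln(1/\alpha)}{1-\alpha^{2}}$, so equality holds. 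I expect the only genuinely delicate point to be the bookkeeping in identifying the equality case --- recognizing that sharpness in the whole chain is governed solely by the Schwarz step and then reversing the substitutions cleanly --- whereas the subsequent verifications that $f$ maps $\mathbb{D}$ into $\mathbb{D}$, omits $\alpha$, and attains the bound are routine.
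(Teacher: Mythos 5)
Your proposal is correct and follows essentially the same route as the paper: deduce the inequality from Theorem 1 with $k=1$, then obtain the extremal function by tracing equality back through the Schwarz Lemma step ($\varphi(z)=cz$, here with $c=1$) and unwinding $\rho$, $g$, $h$ to solve for $f$. In fact your closing verifications that $f$ maps $\mathbb{D}$ into $\mathbb{D}$, omits $\alpha$, vanishes at $0$, and attains $\left|f'(0)\right|=2\alpha\ln(1/\alpha)/(1-\alpha^{2})$ are carried out in more detail than in the paper, which merely asserts them.
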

\begin{proof}
The first part of the corollary is obvious. It follows from our proof of Theorem 1 that the equality occurs when $\left|\rho'(0)\right|=2$. It is equivalent to say that $\left|\varphi'(0)\right|=1$, where $\varphi:\mathbb{D}\longrightarrow \mathbb{D}$ is the function $\varphi(z)=F^{-1}(\rho(z))=\frac{g(z)-\ln(\alpha)}{g(z)+\ln(\alpha)}$. By Schwarz's Lemma, $\left|\varphi'(0)\right|=1$ if and only if there exists a complex $c$ such that $\left|c\right|=1$ and $\varphi(z)=cz$. Then, 
\begin{eqnarray*}
	\left|\varphi'(0)\right|=1\!\!&\Longleftrightarrow& \!\!\varphi(z)=cz\Longleftrightarrow  g(z)=(\frac{1+cz}{1-cz})\cdot \ln(\alpha)\\
&\Rightarrow&\!\!\frac{\alpha-f(z)}{1-\alpha f(z)}=\mathrm{e}^{g(z)}=\mathrm{e}^{(\frac{1+cz}{1-cz})\ln(\alpha)}\\
&\Rightarrow&\!\!f(z)=\frac{\alpha-\mathrm{e}^{(\frac{1+cz}{1-cz})\ln(\alpha)}}{1-\alpha\mathrm{e}^{(\frac{1+cz}{1-cz})\ln(\alpha)}},\ \ \ \ \forall z\in\mathbb{D}
\end{eqnarray*}
The function $f$ satisfies $f(0)=0$, $\left|f\right|<1$  and $\left|f'(0)\right|=2\alpha\ln(1/\alpha)/(1-\alpha^{2})$. \end{proof}
We analyze now the case of two distinct exceptional values $\alpha_{1},\alpha_{2}\in(0,1)$. With the notations of Corollary 1, the equality in $(2.1)$ occurs if and only if 
\begin{eqnarray*}
	g(z)=[\frac{1+cz}{1-cz}]\ln(\alpha_{1}\alpha_{2})
\end{eqnarray*}
Using the identity $h=\mathrm{e}^{g}$ (where h is as in Theorem 1), we can write
\begin{eqnarray*}
\left(\frac{\alpha_{1}-f(z)}{1-\alpha_{1}f(z)}\right)\cdot\left(\frac{\alpha_{2}-f(z)}{1-\alpha_{2}f(z)}\right)=\mathrm{e}^{[\frac{1+cz}{1-cz}]\ln(\alpha_{1}\alpha_{2})}\\
\end{eqnarray*}
By expanding the above expression, we obtain 
\begin{eqnarray*}
	(1-\alpha_{1}\alpha_{2}u)f^{2}+(u-1)(\alpha_{1}+\alpha_{2})f+\alpha_{1}\alpha_{2}-u=0
\end{eqnarray*}
where $u:\mathbb{D}\longrightarrow\mathbb{D}-\left\{0\right\}$ is the function $u(z)=\mathrm{e}^{[\frac{1+cz}{1-cz}]\ln(\alpha_{1}\alpha_{2})}$.
\\
\\
Solving this equation in $f$, we find
\begin{eqnarray*}	f=\frac{(1-u)(\alpha_{1}+\alpha_{2})-\sqrt{(u-1)^{2}(\alpha_{1}+\alpha_{2})^{2}-4(1-\alpha_{1}\alpha_{2}u)(\alpha_{1}\alpha_{2}-u)}}{2(1-\alpha_{1}\alpha_{2}u)}
\end{eqnarray*}
We define now the function $F:\mathbb{D}\longrightarrow\mathbb{C}$ by
\begin{eqnarray*}
F(z)\!\!\!&=&\!\!\!(u(z)-1)^{2}(\alpha_{1}+\alpha_{2})^{2}-4(1-\alpha_{1}\alpha_{2}u(z))(\alpha_{1}\alpha_{2}-u(z))\\
&=&\!\!\!((\alpha_{1}-\alpha_{2})^{2})u(z)^{2}+(4+4\alpha_{1}^{2}\alpha_{2}^{2}-2(\alpha_{1}+\alpha_{2})^{2})u(z)+(\alpha_{1}-\alpha_{2})^{2}
\end{eqnarray*}
Then $f$ represents an analytic function if and only if $F(\mathbb{D})$ is in the domain of some branch of the logarithm. In general, this does not occur. For instance, if $\alpha_{1}=1/2$ and $\alpha_{2}=1/4$, $F(\mathbb{D})$ contains a neighborhood of origin.
\\ 
\\
But, if for a given choice of the constants $\alpha_{1}$ and $\alpha_{2}$ we have $F(\mathbb{D})\cap [0,\infty)=\emptyset$, then there exists $\sqrt{F}$ and $f$ is analytic. To achieve this, it is sufficient that the solutions of $F(w)=t$ (with $t\geq0$) be complex numbers with $\left|w\right|\geq 1$.
\\
\\
First, we observe that $F(w)=t$ iff 
\begin{align} 
w=\frac{b-\sqrt{(-b)^{2}-4(\alpha_{1}-\alpha_{2})^{2}((\alpha_{1}-\alpha_{2})^{2}-t)}}{2(\alpha_{1}-\alpha_{2})^{2}}
\end{align}
where $b:=4\alpha_{1}^{2}\alpha_{2}^{2}+2(\alpha_{1}+\alpha_{2})^{2}-4$. Suppose now that $\beta$ is a real number in $(0,1)$ such that $4\beta^{4}+8\beta^{2}-4<0$. Making $\alpha_{1}\longrightarrow\beta^{+}$ and $\alpha_{2}\longrightarrow\beta^{-}$, the expression (2.2) tends to infinity and then, $\left|w\right|>1$ for all values $\alpha_{1}$ and $\alpha_{2}$ sufficiently close to $\beta$.\\
\\
We can summarize our analysis in the following theorem
\begin{theorem}
Let $\beta\in(0,1)$ be a real number such that $4\beta^{4}+8\beta^{2}-4<0$. Consider distinct real numbers $\alpha_{1},\alpha_{2}\in(0,1)$ sufficiently close to $\beta$. If $f:\mathbb{D}\longrightarrow \mathbb{D}$ is an analytic function with $f(0)=0$ and $f(\mathbb{D})\cap\left\{\alpha_{1},\alpha_{2}\right\}=\emptyset$, then $f$ satisfies the inequality (2.1) (with $k=2$). This inequality is sharp in the sense that there exists a function $f$ such that equality holds.    
\end{theorem}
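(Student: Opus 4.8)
The inequality in Theorem 2 is not new content: it is exactly Theorem 1 specialized to $k=2$, which applies to every analytic $f:\mathbb{D}\to\mathbb{D}$ with $f(0)=0$ omitting $\alpha_{1}$ and $\alpha_{2}$. So the whole task is the sharpness statement, and the plan is to manufacture an extremal $f$ from the equality analysis already carried out for Corollary 1. Tracing the proof of Theorem 1 in the case $k=2$, equality in (2.1) forces $\left|\rho'(0)\right|=2$; by the equality case of Schwarz's Lemma the subordinating map of Lemma 2 must be a rotation $\varphi(z)=cz$ with $\left|c\right|=1$, and this pins the logarithm down to $g(z)=\left[\frac{1+cz}{1-cz}\right]\ln(\alpha_{1}\alpha_{2})$. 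Substituting into $h=\mathrm{e}^{g}$ and clearing denominators yields the quadratic $(1-\alpha_{1}\alpha_{2}u)f^{2}+(u-1)(\alpha_{1}+\alpha_{2})f+(\alpha_{1}\alpha_{2}-u)=0$, with $u(z)=\mathrm{e}^{[\frac{1+cz}{1-cz}]\ln(\alpha_{1}\alpha_{2})}$, whose solution is the candidate extremal $f$ written before the theorem in terms of $\sqrt{F}$.

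Granting for a moment that this $f$ is analytic, the remaining verifications are routine and I would dispatch them quickly. The normalization $u(0)=\alpha_{1}\alpha_{2}$ makes $F(u(0))=(1-\alpha_{1}\alpha_{2})^{2}(\alpha_{1}+\alpha_{2})^{2}$ a perfect square, and with the sign of the root chosen accordingly $f(0)=0$; the function omits $\alpha_{1}$ and $\alpha_{2}$ because $h=\mathrm{e}^{g}$ never vanishes, so the numerators $\alpha_{i}-f$ cannot; the value $\left|f'(0)\right|$ matches the right-hand side of (2.1) by reading the derivative computation of Theorem 1 backwards in the equality regime $\left|\rho'(0)\right|=2$; and $\left|f\right|<1$ follows by tracking that $\left|h\right|<1$ (equivalently $\mathrm{Re}(g)<0$) and inverting the relation $h=\mathrm{e}^{g}$. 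None of these is the real difficulty.

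The crux---and what I expect to be the genuine obstacle---is to certify that $f$ is a single-valued analytic self-map at all, i.e. that $\sqrt{F}$ exists as an analytic function on $\mathbb{D}$. Since $\mathbb{D}$ is simply connected this is equivalent to $F\circ u$ being zero-free, so I would analyze the zeros of the self-reciprocal polynomial $F(w)=(\alpha_{1}-\alpha_{2})^{2}w^{2}+\left(4+4\alpha_{1}^{2}\alpha_{2}^{2}-2(\alpha_{1}+\alpha_{2})^{2}\right)w+(\alpha_{1}-\alpha_{2})^{2}$ directly. Its discriminant factors as
\[
16(1-\alpha_{1}^{2})(1-\alpha_{2}^{2})(1-\alpha_{1}\alpha_{2})^{2}>0,
\]
so the two zeros are real; being reciprocal (their product is $1$) and negative, one of them, say $w_{0}$, lies in $(-1,0)\subset\mathbb{D}\setminus\{0\}$. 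This is the heart of the matter: one first checks that $u(\mathbb{D})=\mathbb{D}\setminus\{0\}$ (the map $z\mapsto\frac{1+cz}{1-cz}$ carries $\mathbb{D}$ onto the right half-plane, and $\exp$ composed with multiplication by $\ln(\alpha_{1}\alpha_{2})<0$ carries that onto the punctured disk), so the inner zero $w_{0}$ is actually attained by $u$, and $F\circ u$ acquires a simple, hence genuine, branch point.

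Overcoming this is exactly where the hypothesis $4\beta^{4}+8\beta^{2}-4<0$ and the proximity of $\alpha_{1},\alpha_{2}$ to $\beta$ must be brought to bear, through the representation (2.2) of the solutions of $F(w)=t$. The delicate point is that the limiting computation in (2.2) sends \emph{one} solution to infinity as $\alpha_{1}\to\beta^{+}$ and $\alpha_{2}\to\beta^{-}$, whereas its reciprocal partner---the inner zero $w_{0}$---migrates toward $0$ and remains inside the disk; so the naive ``$\left|w\right|>1$'' argument controls the wrong root. Making the sharpness rigorous therefore hinges on reconciling this tension, either by choosing the branch cut so that the punctured-disk values of $F$ still miss it, or by re-examining whether the candidate $f$ survives the apparent branch point; this is the step I would expect to consume essentially all of the work, and the one most deserving of scrutiny.
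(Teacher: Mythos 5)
Your reading of the situation is essentially correct, and the obstruction you isolate in your final paragraph is not a tension to be reconciled --- it is fatal, both to your plan and to the paper's own proof, which turns out to be exactly the naive argument you criticize. The paper's display (2.2) asserts that $F(w)=t$ holds \emph{iff} $w$ equals a single expression, i.e.\ it silently discards one of the two roots of the quadratic, and its limiting argument as $\alpha_{1}\to\beta^{+}$, $\alpha_{2}\to\beta^{-}$ controls only the retained root. (There is also a sign error: the constant should be $2(\alpha_{1}+\alpha_{2})^{2}-4\alpha_{1}^{2}\alpha_{2}^{2}-4$, which tends to $-4(1-\beta^{2})^{2}<0$ for \emph{every} $\beta\in(0,1)$; the hypothesis $4\beta^{4}+8\beta^{2}-4<0$ is an artifact of that error.) Your discriminant factorization $16(1-\alpha_{1}^{2})(1-\alpha_{2}^{2})(1-\alpha_{1}\alpha_{2})^{2}$ is correct, and since the middle coefficient $4+4\alpha_{1}^{2}\alpha_{2}^{2}-2(\alpha_{1}+\alpha_{2})^{2}=2\left[(1-\alpha_{1}\alpha_{2})^{2}+(1-\alpha_{1}^{2})(1-\alpha_{2}^{2})\right]$ is always positive, both roots of $F$ are negative reals with product $1$; so the discarded root $w_{0}$ lies in $(-1,0)$ for \emph{all} distinct $\alpha_{1},\alpha_{2}\in(0,1)$. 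Conceptually, $w_{0}$ is the critical value of the degree-two Blaschke product $B(f)=\frac{(\alpha_{1}-f)(\alpha_{2}-f)}{(1-\alpha_{1}f)(1-\alpha_{2}f)}$, whose unique critical point in $\mathbb{D}$ lies on the segment between $\alpha_{2}$ and $\alpha_{1}$; its presence inside $\mathbb{D}$ is forced, so the condition $F(\mathbb{D})\cap[0,\infty)=\emptyset$ that the paper needs is never satisfied.

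You should not, however, leave the door open as you do at the end: neither of your two escape routes can work, because the sharpness statement itself is false. A choice of branch cut is irrelevant --- the problem is an interior zero of odd order of $F\circ u$, not a topological choice of square root. And the candidate $f$ cannot survive the branch point, by an order count: equality in (2.1) forces $B\circ f=u$, where $u$ maps $\mathbb{D}$ onto $\mathbb{D}\setminus\{0\}$ (as you note), so there exists $z_{0}$ with $u(z_{0})=w_{0}$. Any analytic $f:\mathbb{D}\to\mathbb{D}$ with $B\circ f=u$ must then send $z_{0}$ to the unique point of $B^{-1}(w_{0})\cap\mathbb{D}$, which is the critical point $f_{0}$ of $B$; consequently $u-w_{0}=B\circ f-B(f_{0})$ vanishes at $z_{0}$ to order at least $2$, whereas $u-w_{0}$ has a simple zero there because $u$ is locally injective. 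This contradiction shows that equality in (2.1) with $k=2$ and distinct $\alpha_{1},\alpha_{2}$ is attained by no admissible $f$ whatsoever. In short: your proposal correctly locates the gap, but the gap belongs to the paper, not to you; pushed one step further, your analysis is a refutation of Theorem 2's sharpness claim rather than a proof of it.
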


\end{document}